\documentclass[a4paper,11pt]{amsart}

\usepackage{mathptmx,amssymb,amscd,latexsym,eulervm}
\usepackage{amsmath}
\usepackage{amsthm}
\usepackage{amsfonts}
\usepackage{mathdots}
\usepackage[onehalfspacing]{setspace}
\usepackage{paralist}
\usepackage{aliascnt}
\usepackage[initials,lite]{amsrefs}
\usepackage[inner=2.4cm,outer=2.4cm,bottom=3.2cm]{geometry}
\usepackage{xcolor}
\definecolor{citepink}{HTML}{AA3377}
\usepackage[
  colorlinks=true,
  citecolor=citepink,
  linkcolor=blue,
  urlcolor=blue
]{hyperref}

\allowdisplaybreaks

\BibSpec{collection.article}{%
  +{}  {\PrintAuthors}                {author}
  +{,} { \textit}                     {title}
  +{.} { }                            {part}
  +{:} { \textit}                     {subtitle}
  +{,} { \PrintContributions}         {contribution}
  +{,} { \PrintConference}            {conference}
  +{}  {\PrintBook}                   {book}
  +{,} { }                            {booktitle}
  +{,} { }                            {series}
  +{, vol.} { }                       {volume}
  +{,} { }                            {publisher}
  +{,} { \PrintDateB}                 {date}
  +{,} { pp.~}                        {pages}
  +{,} { }                            {status}
  +{,} { \PrintDOI}                   {doi}
  +{,} { available at \eprint}        {eprint}
  +{}  { \parenthesize}               {language}
  +{}  { \PrintTranslation}           {translation}
  +{;} { \PrintReprint}               {reprint}
  +{.} { }                            {note}
  +{.} {}                             {transition}
  +{}  {\SentenceSpace \PrintReviews} {review}
}
\AtBeginDocument{\def\MR#1{}}

\makeatletter
\@namedef{subjclassname@2020}{\textup{2020} Mathematics Subject Classification}
\makeatother

\newtheorem{theorem}{Theorem}[section]
\newaliascnt{proposition}{theorem}
\newtheorem{proposition}[proposition]{Proposition}
\aliascntresetthe{proposition}

\newaliascnt{corollary}{theorem}

\aliascntresetthe{corollary}

\newaliascnt{lemma}{theorem}

\aliascntresetthe{lemma}

\theoremstyle{remark}
\newaliascnt{remark}{theorem}
\newtheorem{remark}[remark]{Remark}
\aliascntresetthe{remark}

\newtheorem{example}[theorem]{Example}

\DeclareMathOperator{\Fitt}{Fitt}
\DeclareMathOperator{\End}{End}
\DeclareMathOperator{\pd}{pd}
\DeclareMathOperator{\grade}{grade}
\DeclareMathOperator{\Ass}{Ass}
\DeclareMathOperator{\ann}{ann}
\DeclareMathOperator{\im}{im}

\newcommand{\m}{\mathfrak m}

\title[Fitting ideals and quasi-Gorenstein rings]{A Fitting Criterion for Quasi-Gorenstein Local Rings}

\author{Sora Miyashita}
\address[Miyashita]{Department of Pure and Applied Mathematics, Graduate School of Information Science and Technology, The University of Osaka, Suita, Osaka 565-0871, Japan}
\email{u804642k@ecs.osaka-u.ac.jp}

\date{\today}
\keywords{Fitting ideal, canonical module, quasi-Gorenstein ring, Hilbert--Burch theorem}
\subjclass[2020]{Primary 13C05, 13H10; Secondary 13D02}

\begin{document}

\begin{abstract}
Let $(R,\mathfrak m)$ be a Noetherian local ring admitting a canonical module $\omega_R$. We prove that $R$ is quasi-Gorenstein if and only if $\operatorname{Fitt}_1^R(\omega_R)\cong\omega_R$. When $R$ is Cohen--Macaulay, the same condition characterizes the Gorenstein property, confirming an expectation of Eisenbud, Ficarra, Herzog, and Moradi and extending their result for canonical ideals in local Cohen--Macaulay domains of type at most two.
\end{abstract}

\maketitle

\section{Introduction}

Let $M$ be a finitely generated module over a Noetherian ring $R$. If $R^m\xrightarrow{A}R^n\longrightarrow M\longrightarrow0$ is a finite free presentation, then the first Fitting ideal of $M$ is $\Fitt_1^R(M)=I_{n-1}(A)$, where $I_{n-1}(A)$ denotes the ideal generated by the $(n-1)$-minors of $A$. Fitting ideals are independent of the chosen presentation and commute with localization; see, for example, \cite[Section~20.2]{Eisenbud}.

The Hilbert--Burch theorem implies that $\Fitt_1^R(I)=I$ whenever $I$ is a perfect ideal of grade two in a local ring. Eisenbud, Ficarra, Herzog, and Moradi proved a converse: if $\grade I\geq2$ and $\Fitt_1^R(I)=I$, then $I$ is perfect of grade two (see \cite[Theorem~3.1]{EFHM}). They also considered the canonical module of a Cohen--Macaulay local ring. In that setting they expected that the condition $\Fitt_1^R(\omega_R)\cong\omega_R$ characterizes the Gorenstein property, and established the corresponding equality statement for a canonical ideal when $R$ is a domain of Cohen--Macaulay type at most two (see \cite[Proposition~2.4]{EFHM}).

The purpose of this paper is to prove the expected characterization.
We show the following:

\begin{theorem}\label{thm:main}
Let $(R,\m)$ be a Noetherian local ring admitting a canonical module $\omega_R$. Then $R$ is quasi-Gorenstein if and only if $\Fitt_1^R(\omega_R)\cong\omega_R$.
\end{theorem}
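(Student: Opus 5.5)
We prove the easy direction first, and then the converse by induction on $\dim R$.

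\emph{Quasi-Gorenstein implies the Fitting condition.} If $R$ is quasi-Gorenstein, then $\omega_R\cong R$ has the free presentation $0\to R\to\omega_R\to 0$. With $n=1$ this gives $\Fitt_1^R(\omega_R)=I_0=R\cong\omega_R$.

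\emph{Reduction to an ideal.} For the converse, assume $\Fitt_1^R(\omega_R)\cong\omega_R$ and set $I:=\Fitt_1^R(\omega_R)$. Fitting ideals depend only on the isomorphism class of the module, so $I$ is an ideal with $I\cong\omega_R$ and $\Fitt_1^R(I)=I$. Let $n=\mu(\omega_R)$. If $n=1$, then $\Fitt_1^R(\omega_R)=R$ by convention, so $\omega_R\cong R$ and we are done. So assume $n\ge 2$ and aim for a contradiction. Using a minimal presentation, all $(n-1)$-minors lie in $\m$, hence $I\subseteq\m$.

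\emph{Induction on $\dim R$.} Both $\Fitt_1$ and the canonical module localize: for $\mathfrak p\in\operatorname{Supp}\omega_R$, $(\omega_R)_{\mathfrak p}$ is a canonical module of $R_{\mathfrak p}$. Hence the hypothesis descends to every $R_{\mathfrak p}$ with $\mathfrak p\in\operatorname{Supp}\omega_R$.
\begin{enumerate}
\item[(a)] \emph{Base case, $R_{\mathfrak p}$ Artinian.} Here $\omega_{R_{\mathfrak p}}=E(k(\mathfrak p))$ has length $\ell(R_{\mathfrak p})$ by Matlis duality. An ideal of $R_{\mathfrak p}$ isomorphic to it has full length, so it equals $R_{\mathfrak p}$. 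Thus $\omega_{R_{\mathfrak p}}\cong R_{\mathfrak p}$.
\item[(b)] \emph{Inductive step.} By induction, $\omega_R$ is free of rank one on the punctured spectrum of $\operatorname{Supp}\omega_R$. In particular $\omega_R$ has rank one on its support.
\end{enumerate}

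\emph{Cramer's rule and the trace.} For the rank-one module $I$ with $n\times m$ presentation matrix $A$, take any $n-1$ columns of $A$. The $n$ signed maximal minors of that $n\times(n-1)$ submatrix annihilate the columns of $A$ on the rank-one support, by the Cramer/Laplace identities. Since $\omega_R$ has no embedded torsion, they therefore define homomorphisms $I\to R$. It follows that
\[
I=\Fitt_1^R(I)\subseteq \operatorname{tr}(I)=\operatorname{tr}(\omega_R),
\]
and more precisely that $I$ is generated by images $f(I)$ of such Cramer maps $f$.

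\emph{Closing the argument; the main obstacle.} The step I expect to be hardest is turning this into a contradiction, using either $I\subseteq\m$ or the punctured-spectrum freeness. My first attempt goes through $\Hom_R(\omega_R,\omega_R)$, which is $S_2$ on its support and is isomorphic to $R$ on the punctured spectrum by (b).
\begin{enumerate}
\item[(i)] When $\dim R\ge2$, I would use the $S_2$ property of $\omega_R$ to extend the local isomorphisms $\omega_R\cong R$ across $\m$, after passing to the completion. This should show directly that $\omega_R$ is cyclic, contradicting $n\ge 2$.
\item[(ii)] When $\dim R=1$, I would combine $I\subseteq\operatorname{tr}(\omega_R)$ with the length computation of (a) on the minimal primes. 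The expected conclusion is that $\operatorname{tr}(\omega_R)\not\subseteq\m$. Then $\omega_R$ surjects onto $R$, so it splits off a free summand of rank one. Since $\omega_R$ has rank one and no torsion summand, this forces $\omega_R\cong R$.
\end{enumerate}
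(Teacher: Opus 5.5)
Your easy direction, the reduction to the ideal $I$, the Artinian case (a), and the inductive set-up (b) are fine. The closing step, however, is not merely hard: it is absent. Neither (i) nor (ii) uses the hypothesis $\Fitt_1^R(I)=I$ at $\m$, which is where all the content lies.

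The mechanism in (i) cannot work. $(S_2)$ together with freeness of rank one on the punctured spectrum does not glue to $\omega_R\cong R$, even after completion. For the complete ring $R=k[[x^3,x^2y,xy^2,y^3]]$, $\omega_R$ is $(S_2)$ and free of rank one away from $\m$, yet $\mu(\omega_R)=2$, because its class in $\mathrm{Cl}(R)$ is nontrivial.

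In (ii), the inclusion $I\subseteq\operatorname{tr}(I)$ carries no information, since the inclusion map $I\hookrightarrow R$ gives it for every ideal. Every Cramer map has image generated by $(n-1)$-minors of $A$, hence contained in $\Fitt_1^R(I)=I\subseteq\m$. So these maps can never witness $\operatorname{tr}(\omega_R)\not\subseteq\m$. Generic freeness from (a) is also compatible with $\operatorname{tr}(\omega_R)\subseteq\m$, as $k[[t^3,t^4,t^5]]$ shows. Incidentally, the Cramer vectors kill the columns of $A$ only if $\det(B,a)=0$ in $R$. That is a condition on $\Ass R$, not on torsion in $\omega_R$.

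The case $\dim R\ge2$ can be rescued inside your induction by a different argument. By (b), $I_{\mathfrak p}=\Fitt_1^{R_{\mathfrak p}}(R_{\mathfrak p})=R_{\mathfrak p}$ for every prime $\mathfrak p\neq\m$. Primes outside $\operatorname{Supp}\omega_R$ cannot exist, since there $0=I_{\mathfrak p}=\Fitt_1^{R_{\mathfrak p}}(0)=R_{\mathfrak p}$. Hence $I$ is $\m$-primary. Since $\operatorname{depth}I=\operatorname{depth}\omega_R\ge2$, the local cohomology sequence of $0\to I\to R\to R/I\to0$ yields $R=I+H^0_\m(R)$, which forces $I=R$.

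The case $\dim R=1$ is the heart of the theorem. It requires the idea you are missing: the homothety map $R\to\End_R(\omega_R)$ is an isomorphism. Pass to $\overline R=R/H^0_\m(R)$, noting that $I\cap H^0_\m(R)=H^0_\m(I)=0$, so $I$ embeds as a regular ideal of $\overline R$.
\begin{enumerate}
\item[(1)] Each Cramer map is multiplication by some $\lambda_B\in Q(\overline R)$ with $\lambda_BI\subseteq I$, hence $\lambda_B\in\overline R$.
\item[(2)] Your ``more precisely'' remark then reads $I=DI$ with $D=(\lambda_B)_B$, and Nakayama makes some $\lambda_B$ a unit.
\item[(3)] The corresponding $n-1$ columns of $A$ then freely generate the syzygies, so $\pd I\le1$.
\item[(4)] You still need that finite projective dimension of $\omega$ forces $\omega\cong R$: Auslander--Buchsbaum in this Cohen--Macaulay situation, Aoyama's theorem in general. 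Then $I=\Fitt_1^R(\overline R)=R$.
\end{enumerate}
This is precisely Proposition~\ref{prop:determinant} combined with the paper's reductions.
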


Here quasi-Gorenstein means that $\omega_R\cong R$, without any Cohen--Macaulay assumption.
The key ingredient is Proposition~\ref{prop:determinant}, a determinant criterion relating the first Fitting ideal of a regular ideal to its projective dimension. Combined with standard properties of canonical modules, it yields Theorem~\ref{thm:main}.

\section{A determinant criterion in grade one}

Throughout this section, $(R,\m)$ is a Noetherian local ring and $Q=Q(R)$ is its total quotient ring. An ideal is called \emph{regular} if it contains an $R$-nonzerodivisor.

If $I$ is a regular ideal, every $R$-linear endomorphism of $I$ is multiplication by an element of $Q$. More precisely,
\[
\End_R(I)=(I:_Q I):=\{q\in Q:qI\subseteq I\}.
\tag{2.1}\label{eq:end-colon}
\]
Indeed, choose a nonzerodivisor $a\in I$. For $\varphi\in\End_R(I)$, put $q=\varphi(a)/a\in Q$. For every $x\in I$ one has $a\varphi(x)=\varphi(ax)=x\varphi(a)$, and hence $\varphi(x)=qx$.

\begin{proposition}\label{prop:determinant}
Let $I\subseteq R$ be a regular ideal, and assume that the homothety map $R\longrightarrow\End_R(I)$ is an isomorphism. Then the following conditions are equivalent:
\begin{compactenum}[\rm(i)]
\item $\pd_R I\leq1$;
\item $\Fitt_1^R(I)\cong I$.
\end{compactenum}
\end{proposition}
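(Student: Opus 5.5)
The plan is to prove (i)$\Rightarrow$(ii) with the Hilbert--Burch theorem, and (ii)$\Rightarrow$(i) by a Cramer's-rule analysis of a presentation matrix, using the hypothesis $R\cong\End_R(I)=(I:_QI)$ from \eqref{eq:end-colon} at the key point.

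\emph{(i)$\Rightarrow$(ii).} If $\pd_R I=0$, then $I\cong R$ and $\Fitt_1^R(I)=R\cong I$. Suppose $\pd_R I=1$. Since $I$ contains a nonzerodivisor we have $IQ=Q$, so $I$ has rank one. A minimal resolution therefore has the shape $0\to R^{n-1}\xrightarrow{A}R^n\to I\to 0$. The Hilbert--Burch theorem (in the form of \cite[Theorem~20.15]{Eisenbud}) then gives $I=a\,I_{n-1}(A)$ for some nonzerodivisor $a$. Hence $\Fitt_1^R(I)=I_{n-1}(A)\cong I$.

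\emph{(ii)$\Rightarrow$(i).} Take generators $x=(x_1,\dots,x_n)$ of $I$ and a presentation $R^m\xrightarrow{A}R^n\to I\to0$. The columns of $A$ span the syzygy module $Z=\ker(x)$. Since $I_Q=Q$, the matrix $A$ has rank $n-1$ over $Q$. For each set $S$ of $n-1$ columns, let $\Delta^{(S)}$ be the vector of signed maximal minors of $A_S$. Cramer's rule gives $x_i\Delta^{(S)}_j=x_j\Delta^{(S)}_i$, so $\Delta^{(S)}=q_S\,x$ for some $q_S\in Q$ (divide by a nonzerodivisor in $I$). Consequently $\Fitt_1^R(I)=J I$, where $J=\sum_S Rq_S\subseteq Q$.

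Next we exploit the isomorphism. An isomorphism $I\to JI$ sends a nonzerodivisor to an element with zero annihilator, so $JI$ is regular. Arguing as in \eqref{eq:end-colon}, the isomorphism is multiplication by some unit $u\in Q$, so $JI=uI$. Then $J'=u^{-1}J$ satisfies $J'I=I$. Thus $J'\subseteq(I:_QI)=R$ by hypothesis, and Nakayama's lemma forces $J'=R$. Since $R$ is local, some generator $u^{-1}q_S$ is a unit. Fix this $S$; then $J=q_SR$.

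It remains to show that $0\to R^{n-1}\xrightarrow{A_S}R^n\xrightarrow{x}I\to0$ is exact. This is the main obstacle.
\begin{compactenum}[(a)]
\item \emph{Injectivity of $A_S$.} The maximal minors of $A_S$ generate $q_SI\cong I$, which is a regular ideal. By McCoy's theorem the columns of $A_S$ are therefore linearly independent.
\item \emph{Spanning $Z$.} Let $z\in Z$. Apply Cramer's rule to $A_S$ with row $i$ deleted. This gives $\Delta^{(S)}_i z=A_S w^{(i)}$, where the entries of $w^{(i)}$ are signed minors of $[A_S\,|\,z]$ with one column of $A_S$ replaced by $z$, computed with row $i$ deleted.
\item \emph{Factoring the minors.} These minors are $(n-1)$-minors of an enlarged presentation matrix, and $\Fitt_1$ is independent of the presentation. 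So, by the same Cramer identity as before, $w^{(i)}_k=r_k q_S x_i$ with $r_k\in R$ independent of $i$; this uses $J=q_SR$.
\item \emph{Cancelling.} We obtain $q_Sx_i z=q_S x_i A_S r$ for all $i$. Take an $R$-combination of the $x_i$ that is a nonzerodivisor, and note that $q_S$ is a unit of $Q$. Cancelling yields $z=A_Sr$.
\end{compactenum}
Thus $\pd_R I\le1$.

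I expect the bookkeeping in step (c) to need the most care: checking that the replaced-column minors factor as $r_kq_Sx_i$ with $r_k\in R$, uniformly in $i$.
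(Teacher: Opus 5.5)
Your proof is correct and follows essentially the paper's route. For (ii)$\Rightarrow$(i), both arguments write the maximal minors as $q_S x$, force the normalized coefficients into $(I:_QI)=R$, use Nakayama to get a unit, and show via Cramer-type ratios that the chosen $n-1$ columns resolve $I$. The paper normalizes by replacing $I$ with $\Fitt_1^R(I)$ rather than dividing by your $u$, and uses a $Q$-basis of $Z_Q$ instead of McCoy plus row-deleted Cramer. For (i)$\Rightarrow$(ii), the paper simply reproves the Hilbert--Burch factorization you cite.

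In step (c), say explicitly why $r_k:=q_{S_k}/q_S$ lies in $R$. From $q_{S_k}I\subseteq\Fitt_1^R(I)=q_SI$ you get $r_kI\subseteq I$, and then $(I:_QI)=R$ gives $r_k\in R$. Alternatively, use multilinearity after writing $z$ as an $R$-combination of columns of $A$; this is what the paper's use of ordered tuples with repetitions achieves.
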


\begin{proof}
Assume first that $\pd_R I\leq1$, and set $n=\mu_R(I)$. If $n=1$, then $I\cong R$, and hence $\Fitt_1^R(I)=R\cong I$. We may therefore assume $n\geq2$. Since $\pd_R I\leq1$, the kernel of a minimal surjection $R^n\longrightarrow I$ is projective, and hence free because $R$ is local. As $IQ=Q$, localization at $Q$ shows that its rank is $n-1$. Thus there is a free resolution
\[
0\longrightarrow R^{n-1}\xrightarrow{A}R^n\xrightarrow{\boldsymbol f}I\longrightarrow0.
\]
Write $A=(b_1,\ldots,b_{n-1})$ and $\boldsymbol f=(f_1,\ldots,f_n)$. Over $Q$, the columns $b_1,\ldots,b_{n-1}$ form a basis of $\ker(\boldsymbol f_Q)$. Define $\delta:Q^n\longrightarrow Q$ by $\delta(v)=\det(b_1,\ldots,b_{n-1},v)$. Since $\delta$ vanishes on $\ker(\boldsymbol f_Q)$, there is a unique $q\in Q$ such that $\delta=q\boldsymbol f_Q$. If $g\in Q^n$ satisfies $\boldsymbol f_Q(g)=1$, then $b_1,\ldots,b_{n-1},g$ is a basis of $Q^n$, so $q=\delta(g)$ is a unit of $Q$. Evaluating $\delta$ at the standard basis vectors shows that the signed maximal minors of $A$ are $qf_1,\ldots,qf_n$. Consequently,
$\Fitt_1^R(I)=qI\cong I$.

Conversely, assume that $\Fitt_1^R(I)\cong I$, and set $J:=\Fitt_1^R(I)$. Since $J\cong I$, the invariance of Fitting ideals under isomorphism gives
$\Fitt_1^R(J)=\Fitt_1^R(I)=J$.
Moreover, an isomorphism $J\cong I$ transports the homothety map on $J$ to that on $I$, and hence $R\longrightarrow\End_R(J)$ is an isomorphism. The ideal $I$ is faithful because it is regular, so $J$ is faithful as well. If every element of $J$ were a zerodivisor, prime avoidance would give $J\subseteq\mathfrak p=\ann_R(x)$ for some $\mathfrak p\in\Ass(R)$ and some $0 \neq x\in R$, contradicting faithfulness. Thus $J$ is regular.

Set $n=\mu_R(J)$. If $n=1$, then $\Fitt_1^R(J)=R$ by convention, so $J=R$. We may therefore assume $n\geq2$. Choose generators $f_1,\ldots,f_n$ of $J$ and a finite free presentation
\[
R^m\xrightarrow{A}R^n\xrightarrow{\boldsymbol f}J\longrightarrow0.
\]
Here $\boldsymbol f(x_1,\ldots,x_n)=\sum_{i=1}^n f_i x_i$. Write $Z=\ker(\boldsymbol f)=\im(A)$. Since $J$ contains a nonzerodivisor, $JQ=Q$. Thus
\[
0\longrightarrow Z_Q\longrightarrow Q^n\xrightarrow{\boldsymbol f_Q}Q\longrightarrow0
\tag{2.2}\label{eq:split}
\]
is split exact. In particular, $Z_Q$ is a projective $Q$-module of constant rank $n-1$.

Let $B=(b_1,\ldots,b_{n-1})$ be an ordered tuple of columns of $A$, with repetitions allowed, and define $\delta_B:Q^n\longrightarrow Q$ by $\delta_B(v)=\det(b_1,\ldots,b_{n-1},v)$. Since $b_1,\ldots,b_{n-1}\in Z_Q$ and $\bigwedge^n Z_Q=0$, the map $\delta_B$ vanishes on $Z_Q$. Dualizing the split sequence \eqref{eq:split}, we see that every functional on $Q^n$ vanishing on $Z_Q$ is a unique $Q$-multiple of $\boldsymbol f_Q$. Hence there is a unique $\lambda_B\in Q$ such that $\delta_B=\lambda_B\boldsymbol f_Q$. For each standard basis vector $e_i\in Q^n$, the value $\delta_B(e_i)$ is, up to sign, an $(n-1)$-minor of $A$. Therefore $\lambda_B f_i\in I_{n-1}(A)=\Fitt_1^R(J)=J$ for every $i$. It follows that $\lambda_BJ\subseteq J$, and hence $\lambda_B\in(J:_Q J)=R$ by \eqref{eq:end-colon}.

Let $D\subseteq R$ be the ideal generated by the elements $\lambda_B$ as $B$ varies. Every generator of $\Fitt_1^R(J)$ is, up to sign, one of the elements $\lambda_Bf_i$, and every such element is either an $(n-1)$-minor of $A$ or zero. Hence $\Fitt_1^R(J)=JD$. Consequently $J=JD$. Since $J\neq0$, Nakayama's lemma gives $D=R$. Hence some $\lambda_B$ is a unit. Fix a corresponding tuple $B=(b_1,\ldots,b_{n-1})$ and set $u:=\lambda_B\in R^\times$. Choose $g\in Q^n$ with $\boldsymbol f_Q(g)=1$. Then $\det(b_1,\ldots,b_{n-1},g)=\delta_B(g)=u$ is a unit. Therefore $b_1,\ldots,b_{n-1},g$ is a basis of $Q^n$, and $b_1,\ldots,b_{n-1}$ is a basis of $Z_Q$.

Let $z$ be any column of $A$. There are unique $q_1,\ldots,q_{n-1}\in Q$ such that $z=\sum_{j=1}^{n-1}q_jb_j$. For a fixed $j$, let $B_j$ be obtained from $B$ by replacing its $j$th column by $z$. By multilinearity of the determinant, $\delta_{B_j}=q_ju\,\boldsymbol f_Q$. Thus $\lambda_{B_j}=q_ju$. The preceding argument applies to $B_j$ as well, so $q_ju\in R$. Since $u$ is a unit of $R$, we obtain $q_j\in R$ for every $j$. Hence every column of $A$ belongs to $Rb_1+\cdots+Rb_{n-1}$. The reverse inclusion is clear, and therefore $Z=Rb_1+\cdots+Rb_{n-1}$. The elements $b_1,\ldots,b_{n-1}$ are linearly independent over $Q$, and hence over $R$. It follows that $Z\cong R^{n-1}$, so
\[
0\longrightarrow R^{n-1}\longrightarrow R^n\longrightarrow J\longrightarrow0
\]
is a free resolution. Thus $\pd_R J\leq1$, and hence $\pd_R I\leq1$.
\end{proof}

\begin{example}\label{sharpness}
The bound in Proposition~\ref{prop:determinant} is sharp: for
$R=k[[x,y]]$ and $I=(x,y)$, one has
$\Fitt_1^R(I)=I$, $\End_R(I)=R$, and $\pd_R I=1$.

The endomorphism-ring hypothesis cannot be omitted. Indeed, let
$R=k[[x,y]]/(x^2)$ and $I=(x,y)$. A direct calculation gives a presentation
\[
R^2\xrightarrow{\left(\begin{smallmatrix}-y&x\\x&0\end{smallmatrix}\right)}
R^2\xrightarrow{(x\;y)}I\longrightarrow0,
\]
so $\Fitt_1^R(I)=I$. However, $\pd_R I=\infty$, while
$x/y\in(I:_Q I)\setminus R$; hence $R\longrightarrow\End_R(I)$ is not
surjective.
\end{example}

\begin{remark}
Proposition~\ref{prop:determinant} recovers \cite[Theorem~3.1]{EFHM}. Indeed, if $I\subsetneq R$, $\grade I\geq2$, and $\Fitt_1^R(I)=I$, then $I$ is regular and the natural map $R\longrightarrow\operatorname{Hom}_R(I,R)$ is an isomorphism. As the image of $R$ lies in $\End_R(I)\subseteq\operatorname{Hom}_R(I,R)$, the homothety map $R\longrightarrow\End_R(I)$ is also an isomorphism. Proposition~\ref{prop:determinant} gives $\pd_R I\leq1$, and hence $\pd_R(R/I)\leq2$. Therefore
\[
2\leq\grade I\leq\pd_R(R/I)\leq2,
\]
so $\grade I=\pd_R(R/I)=2$. The additional endomorphism-ring condition is automatic in grade at least two, but is essential in grade one.
\end{remark}

\section{The Fitting criterion for canonical modules}

We now prove the main theorem.

\begin{proof}[Proof of Theorem~\ref{thm:main}]
If $R$ is quasi-Gorenstein, then $\omega_R\cong R$, and hence $\Fitt_1^R(\omega_R)=\Fitt_1^R(R)=R\cong\omega_R$.

Conversely, suppose that $\Fitt_1^R(\omega_R)\cong\omega_R$, and set $I:=\Fitt_1^R(\omega_R)\subseteq R$. Then $I\cong\omega_R$, and the invariance of Fitting ideals under isomorphism gives
$\Fitt_1^R(I)=\Fitt_1^R(\omega_R)=I$.

We first treat the case where $R$ is equidimensional and satisfies $(S_2)$. By \cite[Lemma~2.5]{Ma}, the homothety map $R\longrightarrow\End_R(\omega_R)$ is an isomorphism. Transporting it through $I\cong\omega_R$, we obtain an isomorphism $R\longrightarrow\End_R(I)$ under homothety. In particular, $I$ is faithful. If every element of $I$ were a zerodivisor, prime avoidance would give $I\subseteq\mathfrak p=\ann_R(x)$ for some $\mathfrak p\in\Ass(R)$ and some nonzero $x\in R$, contradicting faithfulness. Thus $I$ is regular. Proposition~\ref{prop:determinant} gives $\pd_R\omega_R=\pd_R I\leq1$, and Aoyama's projective-dimension theorem yields $\omega_R\cong R$ (see \cite[Theorem~3]{Aoyama}).

Next assume that $R$ is equidimensional and unmixed. Since $I\cong\omega_R$ is a canonical ideal, it contains a nonzerodivisor by \cite[Proposition~2.4]{Ma}. If $\dim R=0$, every nonzerodivisor of $R$ is a unit, so $I=R$ and hence $\omega_R\cong R$. We may therefore suppose that $\dim R>0$. If $I\neq R$, then $I$ has height one by \cite[Proposition~2.6]{Ma}. Choose $\mathfrak p\in\operatorname{Min}_R(I)$. Since $R$ is unmixed and $\operatorname{ht}\mathfrak p=1$, the local ring $R_{\mathfrak p}$ is one-dimensional and Cohen--Macaulay. Moreover, $(\omega_R)_{\mathfrak p}$ is a canonical module for $R_{\mathfrak p}$ by \cite[Proposition~2.3]{Ma}. Localization of Fitting ideals gives
$\Fitt_1^{R_{\mathfrak p}}((\omega_R)_{\mathfrak p})=I_{\mathfrak p}\cong(\omega_R)_{\mathfrak p}$.
The first part of the proof applied to $R_{\mathfrak p}$ shows that $(\omega_R)_{\mathfrak p}\cong R_{\mathfrak p}$. Consequently,
$I_{\mathfrak p}=\Fitt_1^{R_{\mathfrak p}}(R_{\mathfrak p})=R_{\mathfrak p}$,
contrary to $I\subseteq\mathfrak p$. Hence $I=R$, and therefore $\omega_R\cong R$.

Finally, let $R$ be arbitrary, put $d=\dim R$, and let $\mathfrak j=\mathfrak j(R)$ be the largest ideal of $R$ having dimension strictly smaller than $d$. Set $\overline R=R/\mathfrak j$. By \cite[Definition~2.1 and Remark~2.2]{HH},
$\mathfrak j=\ann_R(\omega_R)$,
$\omega_R$ is also a canonical module for $\overline R$, and $\omega_R$ is an $(S_2)$ module. The maximality of $\mathfrak j$ gives $\mathfrak j(\overline R)=0$, so $\overline R$ is equidimensional and unmixed.

We claim that $I\cap\mathfrak j=0$. Indeed, $I\cong\omega_R$ is an $(S_2)$ module and $\ann_R(I)=\mathfrak j$. Hence $I$ satisfies $(S_1)$, and every prime in $\Ass_R(I)$ is minimal in $\operatorname{Supp}_R(I)=V(\mathfrak j)$. Since $\overline R$ is equidimensional of dimension $d$, each such prime $\mathfrak p$ satisfies $\dim R/\mathfrak p=d$. If $I\cap\mathfrak j\neq0$, choose $\mathfrak p\in\Ass_R(I\cap\mathfrak j)$. Since $I\cap\mathfrak j\subseteq I$, we have $\mathfrak p\in\Ass_R(I)$, whereas $I\cap\mathfrak j\subseteq\mathfrak j$ gives $\mathfrak p\in\operatorname{Supp}_R(\mathfrak j)$ and hence $\dim R/\mathfrak p\leq\dim_R\mathfrak j<d$, a contradiction.

Let $\overline I=(I+\mathfrak j)/\mathfrak j\subseteq\overline R$. Since $I\cap\mathfrak j=0$ and $\mathfrak jI=0$, the natural map $I\longrightarrow\overline I$ is an isomorphism of $\overline R$-modules. By base change for Fitting ideals, we have
$\Fitt_1^{\overline R}(\omega_R)=\Fitt_1^R(\omega_R)\overline R=\overline I\cong I\cong\omega_R$.
The equidimensional unmixed case therefore gives $\omega_R\cong\overline R=R/\mathfrak j$. Taking first Fitting ideals over $R$, we obtain
$I=\Fitt_1^R(\omega_R)=\Fitt_1^R(R/\mathfrak j)=R$,
where the last equality follows from the convention for a cyclic module. Since $I\cong\omega_R$, it follows that $\omega_R\cong R$, as desired.
\end{proof}

If $R$ is Cohen--Macaulay, then quasi-Gorensteinness is equivalent to Gorensteinness. Thus Theorem~\ref{thm:main} proves the expectation in \cite[Section~2]{EFHM}.

\section*{Acknowledgments}
The author is grateful to Matteo Varbaro for bringing to his attention the expectation of Eisenbud, Ficarra, Herzog, and Moradi addressed in this paper. The author used ChatGPT (OpenAI) during the initial exploration of the problem; in particular, it suggested the determinant argument underlying Proposition~\ref{prop:determinant}. The author subsequently verified and revised the argument.

\end{document}